\newcommand{\real}{{\rm Re}\,}
\newcommand{\imag}{{\rm Im}\,}
\newcommand{\C}{\mathbb C}
\newcommand{\R}{\mathbb R}
\newcommand{\hC}{\hat{\mathcal C}}
\newcommand{\CN}{\mathbb{C}^N}
\newcommand{\CNp}{\mathbb{C}^{N^\prime}}
\newtheorem{theorem}{Theorem}[section]
\newtheorem{lemma}[theorem]{Lemma}
\newtheorem{corollary}[theorem]{Corollary}
\theoremstyle{remark}
\theoremstyle{definition}
\theoremstyle{definition}
\begin{document}
\author{Florian Bertrand}
\address{ American University of Beirut, Department of Mathematics,  P.O. box 11-0236, 
Riad El Sohl, Beirut 
1107 2020, Lebanon }
\email{fb31@aub.edu.lb}
\author{Giuseppe Della Sala}
\address{ American University of Beirut, Department of Mathematics,  P.O. box 11-0236, 
Riad El Sohl, Beirut 
1107 2020, Lebanon }\email{gd16@aub.edu.lb}
\author{Bernhard Lamel}
\address{Texas A\&M University at Qatar, Science program, PO Box 23874, Education City, Doha, Qatar }
\title[Extremal discs and Segre varieties]{Extremal discs and Segre varieties\\for real-analytic hypersurfaces in $\mathbb{C}^2$}
\email{bernhard.lamel@qatar.tamu.edu}

\subjclass[2010]{32V15, 32V40, 32T15, 32F45}

\keywords{}
\thanks{Research of the first two authors was  supported by a Research Group Linkage Programme from the Humboldt Foundation and a URB grant from the American University of Beirut.}
\thanks{Research of the third author was supported by the Austrian Science Fund FWF project I3472.}

\begin{abstract}
We show that if the Segre varieties of a 
strictly pseudoconvex hypersurface in $\mathbb{C}^2$ are 
extremal discs for the Kobayashi metric, then that 
hypersurface has to be locally spherical. In particular,
this gives yet another characterization of the unit sphere 
in terms of two important invariant families of objects
coinciding.
\end{abstract}

\maketitle 

\section{Introduction} 
\label{sec:introduction}
There is a deep link between complex analysis  in
a  smoothly bounded 
domain $\Omega \subset \C^N$ and 
the biholomorphically invariant (or short, CR) geometry 
of its boundary $b \Omega$. This link is 
known for 
strictly pseudoconvex smoothly bounded domains, where 
the relationship between one of the most important 
analytical objects associated to the domain, 
its Bergman Kernel function $K(z, \tilde z)$, and 
the CR invariants of its boundary (computable through the 
Chern-Moser normal form) have been well investigated: The 
asymptotic expansion of the Bergman kernel can be 
recovered from boundary invariants (and vice versa), a line 
of research instigated by Fefferman's work on the biholomorphically
invariant geometry of strictly pseudoconvex boundaries,
for which we refer the reader to \cite{MR684898} and also 
to Hirachi's work \cite{MR1745015}. 

In this paper, we will mostly deal with strictly
pseudoconvex domains $\Omega \subset\mathbb{C}^2$, 
whose boundary $b \Omega =:M$ we also assume to be 
real-analytic. In that case, 
there are two important families of boundary invariants: First, 
the Chern-Moser normal form which gives rise to {\em chains}, i.e.
families of biholomorphically invariant curves in $M$ 
intrinsically defined (we 
recall the basics of this in \autoref{sec:the_chern_moser_normal_form}); and the {Segre families} of invariant
complex curves defined near $M$. It is a theorem 
of Faran \cite{MR607109} that if the intersections
of the Segre family with $M$ and the chains
agree, then $M$ is locally biholomorphically equivalent to the 
unit sphere (Faran's result is valid in higher dimensions as well, 
but we will concentrate on $\mathbb{C}^2$ in this paper). 

There is yet another important family of invariant curves
 in a strictly pseudoconvex domain $\mathbb{C}^2$ which are associated 
to the Kobayashi pseudometric,
\[ k_{\Omega} (z,v) := \inf \left\{ a > 0 \colon f\in \mathcal{H} (\Delta, \Omega), 
f(0) = z, a f'(0)=v \right\}, \]
where $\Delta$ denotes the unit disc in $\C$.   
The corresponding integrated pseudodistance, 
the {\em Kobayashi pseudodistance}, gives us, for 
settings in which it actually is a distance,  a {\em biholomorphically invariant} distance notion, and with it, 
a natural hyperbolic geometry. A holomorphic disc $f \colon \Delta \to \Omega$
with $f(0) = z$
is said to be {\em extremal} for $(z,v) \in \Omega \times T_z \Omega$ 
if $f'(0) k_\Omega (z,v) = v$; extremal discs
are natural biholomorphic invariants of a bounded  domain just like the 
chains and Segre families discussed above. In 
the setting of the Kobayashi metric, things are a bit subtle: 
Even though  extremal discs   are proper and geodesics 
for the Kobayashi distance in strictly 
convex domains by the work of Lempert \cite{MR660145}, 
the hyperbolic geometry of strictly pseudoconvex domains is 
more complicated; in particular, a general extremal disc 
may fail to be proper.
However, work of Huang \cites{MR1310634,MR1260844}
shows that for $z$ sufficiently close to 
 $p \in b \Omega$ and for $v$ sufficiently close to the 
 complex tangent space $T^c_p b \Omega$, extremal 
 discs are again proper and complex geodesics. 
 We will
 consider such extremals as yet another biholomorphically invariant 
 family.


The relationship of the geometry of the boundary with 
the biholomorphically invariant hyperbolic geometry has been 
studied less, and in particular, the question answered by 
Faran about the Segre family and chains is open when 
asked about the Segre family and extremal discs. Our 
purpose in this paper is to settle this question (in $\mathbb{C}^2$).
In order to state our theorem, let us write for 
a neighbourhood $U$ of a point $p\in M$, where $M$ is a strictly 
pseudoconvex real
hypersurface, the decomposition $U= U_+ \cup (U\cap M) \cup U_-$
where $U_+$ lies on the pseudoconvex side of $M$.

%

\begin{theorem}\label{thm:mainlocal}
Let $M \subset \mathbb{C}^2$ be  a connected real-analytic hypersurface, 
$p\in M$. Assume
that there exist open neighbourhoods $U,V\subset \mathbb{C}^2$ of $p$ such that the Segre varieties
$S_q \subset U$, $q\in V_-$ are defined and such that 
${S_q \cap U_+}$ is an extremal disc for $U_+$ for every $q \in 
V_-$. 
Then $M$ is umbilical at every  strictly 
pseudoconvex point  of $ V\cap M$, and hence generically 
locally spherical.  
\end{theorem}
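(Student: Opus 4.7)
The approach combines the Chern-Moser normal form at a strictly pseudoconvex point with the stationary-disc characterization of Kobayashi extremal discs near the boundary.

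Fix a strictly pseudoconvex point $p \in V\cap M$ and choose local holomorphic coordinates $(z,w)$ centered at $p$ putting $M$ in Chern-Moser normal form
\[
\operatorname{Im} w = |z|^2 + F(z,\bar z,u),\qquad u=\operatorname{Re} w,
\]
with $F=\sum_{k,\ell\geq 2,\, k+\ell\geq 4} N_{k\ell}(u)\, z^k\bar z^\ell$ subject to the trace conditions. In $\mathbb{C}^2$ these force $N_{22}=N_{23}=N_{33}=0$, so the first potentially non-zero coefficient is $N_{24}=\overline{N_{42}}$ at weighted order $6$; umbilicity of $M$ at $p$ is the vanishing of every $N_{k\ell}$ at $u=0$. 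For $q=(\alpha,\beta)\in V_-$ the equation $\rho(z,w,\bar\alpha,\bar\beta)=0$ cuts out $S_q$ as a graph $w=g(z,\bar\alpha,\bar\beta)$, and composing with a Riemann map onto the projection of $S_q\cap U_+$ to the $z$-plane yields a holomorphic parameterization $\phi_q : \overline{\Delta}\to \overline{U_+}$ with $\phi_q(b\Delta)\subset M$.

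Since $p$ is strictly pseudoconvex and $q$ lies close to $M$, Huang's theorem \cites{MR1310634,MR1260844} promotes the assumed extremality of $\phi_q$ to stationarity: there exist a holomorphic $\phi_q^* : \Delta\to(\mathbb{C}^2)^*$, continuous up to $b\Delta$, and a continuous positive function $\lambda_q$ on $b\Delta$ such that
\[
\phi_q^*(\zeta) = \zeta\,\lambda_q(\zeta)\,\partial\rho\bigl(\phi_q(\zeta),\overline{\phi_q(\zeta)}\bigr),\qquad \zeta\in b\Delta.
\]
This is a Riemann-Hilbert boundary problem whose solvability is the key constraint. The crucial observation is that, because $\phi_q(\Delta)\subset S_q$, the alternative $(1,0)$-form $\partial\rho(\phi_q(\zeta),\bar q)$ depends holomorphically on $\zeta$ throughout $\Delta$ and annihilates the tangent space $T\phi_q$. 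In the model sphere case $F\equiv 0$ a direct calculation shows $\zeta\,\partial\rho(\phi_q(\zeta),\overline{\phi_q(\zeta)})$ is already polynomial in $\zeta$, so stationarity is automatic with $\lambda_q\equiv 1$; this recovers the well-known stationarity of complex-affine Segre lines in the ball. For a general $F$ the obstruction to solvability is encoded in the non-holomorphic difference $\partial\rho(\phi_q(\zeta),\overline{\phi_q(\zeta)})-c_q(\zeta)\,\partial\rho(\phi_q(\zeta),\bar q)$ for a suitable scalar $c_q$, and the coefficients of this difference (expanded in $\overline{\phi_q(\zeta)}-\bar q$) involve precisely the $N_{k\ell}$.

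The final step is a weighted Taylor expansion of the stationarity equation in $\bar q=(\bar\alpha,\bar\beta)$, with the Chern-Moser weighting ($\bar\alpha$ weight $1$, $\bar\beta$ weight $2$). At each weighted order the solvability of the Riemann-Hilbert problem becomes a linear constraint on the $N_{k\ell}(0)$ of that order modulo the coefficients already shown to vanish. The lowest nontrivial order isolates $N_{24}(0)$ (and $N_{42}(0)$), and since solvability must hold for \emph{every} $q\in V_-$ close to $0$, this forces $N_{24}(0)=0$. An induction on weighted degree then sweeps out all the $N_{k\ell}(0)$, giving umbilicity at $p$; varying $p$ over the open set of strictly pseudoconvex points of $V\cap M$ accounts for the $u$-dependence and yields umbilicity everywhere on that open set, whence local sphericity by the Chern-Moser theorem. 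The \emph{main obstacle} is the bookkeeping imposed by the gauge: the three-real-parameter $\operatorname{Aut}(\Delta)$-freedom in the choice of Riemann map together with the unknown positive function $\lambda_q$ must not be able to absorb the Chern-Moser obstructions. One handles this by rigidifying the parameterization (e.g.\ by requiring $\phi_q(0)$ to be the Segre-reflection of $q$ across $M$ and fixing a phase) and by tracking which orders of the expansion are genuinely free gauge and which are invariants, in the spirit of Huang's moduli analysis of stationary discs.
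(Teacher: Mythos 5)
Your outline follows the paper's strategy at the skeleton level (Chern--Moser normal form at a strictly pseudoconvex point, Huang's theorem upgrading extremality to stationarity, and then reading off an obstruction from the solvability of the lift equation), but the step that actually constitutes the proof is asserted rather than carried out. Saying that ``at each weighted order the solvability of the Riemann--Hilbert problem becomes a linear constraint on the $N_{k\ell}(0)$'' is a hope, not an argument: the unknown positive multiplier $\lambda_q$ and the Riemann-map gauge enter the equation at every order, and one must show that they cannot absorb the coefficient $N_{42}(0)$. In the paper this is done by restricting to the one-parameter family of Segre discs $\{w=t^2\}$, parametrizing the scaled boundary curves as $r(\theta,t)=1+k(\theta)t^4+O(t^5)$ with $k(\theta)=\real(Ae^{-2i\theta})$, expanding the multiplier in a Fourier series $c(\theta,t)=\sum\gamma_k(t)e^{ik\theta}$, and combining the moment conditions coming from \emph{both} components of the lift: the $w$-component gives $\overline\gamma_{j+1}(t)=O(t^4)$ and $d^4\overline\gamma_2/dt^4=O(t)$, and only then does the $z$-component isolate $A=O(t)$. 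None of this disentangling appears in your proposal. Moreover, to differentiate the moment conditions four times in the deformation parameter one needs the multiplier and the Riemann maps to depend $\mathcal{C}^4$ on that parameter; this regularity (Kellogg's theorem plus a parametrized Riemann mapping theorem, Lemmas 3.2--3.4 of the paper) is a genuine technical obstacle that your plan does not mention.

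There is also a conceptual error in the endgame. Umbilicity at a point means only $\varphi_{4,2}(0)=\varphi_{2,4}(0)=0$ (vanishing of Cartan's tensor), not the vanishing of every normal-form coefficient $N_{k\ell}(0)$; your proposed ``induction on weighted degree'' sweeping out all $N_{k\ell}(0)$ at a single point is both unnecessary and unjustified --- the higher coefficients at one point are not obstructed by this family of discs, and no such induction is available. The correct conclusion is obtained by proving the vanishing of the $(4,2)$ coefficient at \emph{every} strictly pseudoconvex point of the open set $V\cap M$ and then invoking the Chern--Moser fact that a hypersurface umbilical on an open set is locally spherical.
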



In particular, we also have the following characterization of
the unit ball: 

\begin{corollary}\label{thm:mainglob}
Let $\Omega \subset \mathbb{C}^2$ be  a bounded, simply connected strictly pseudoconvex domain with connected
real-analytic boundary, 
and assume that $U$ is a neighbourhood of $b \Omega$ such that 
$S_q \cap \Omega$ is defined for all $q \in U \cap \bar{\Omega}^c$. If 
 $S_q \cap \Omega$ is an extremal disc (for $\Omega$) for every $q\in U \cap \bar{\Omega}^c$, then 
$\Omega$ is biholomorphic to the the unit ball $\mathbb{B}$. 
\end{corollary}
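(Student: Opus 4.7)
The plan is to apply Theorem~\ref{thm:mainlocal} at every point of $b\Omega$ and then run a monodromy-type gluing argument. Since $\Omega$ is strictly pseudoconvex, every $p\in b\Omega$ is a strictly pseudoconvex point. For each such $p$, I would choose small neighborhoods $U_p\subset V_p \subset U$ of $p$. The core technical issue is that the corollary's hypothesis gives extremality of $S_q\cap \Omega$ for the \emph{global} Kobayashi metric $k_\Omega$, whereas Theorem~\ref{thm:mainlocal} requires extremality of $S_q\cap (U_p)_+$ for the \emph{local} metric $k_{(U_p)_+}$. I would bridge this gap using the localization of the Kobayashi metric at strictly pseudoconvex boundary points (Royden, Graham) together with Huang's characterization of extremal discs near such points as stationary discs in the sense of Lempert---a condition which is local along the boundary. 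Shrinking $U_p$ if necessary, one concludes that (after a suitable reparametrization of $\Delta$) $S_q\cap (U_p)_+$ is extremal for $(U_p)_+$, so Theorem~\ref{thm:mainlocal} applies.

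Once the local hypothesis holds at every $p\in b\Omega$, Theorem~\ref{thm:mainlocal} yields that $b\Omega$ is umbilical at every point of $b\Omega$; since every point is strictly pseudoconvex, the ``generically locally spherical'' conclusion becomes locally spherical at every point, so $b\Omega$ is locally CR-equivalent to an open piece of the unit sphere $S^3 = b\mathbb{B}$ in a neighborhood of every boundary point.

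The final step is to upgrade local sphericity to a global biholomorphism $\Omega \cong \mathbb{B}$, which I would do by a standard uniformization argument in the spirit of Chern--Ji. Cover $b\Omega$ by finitely many open pieces $W_\alpha$, each carrying a CR-diffeomorphism onto an open piece of $S^3$. By the Poincar\'e--Alexander extension theorem, each such CR-map extends to a biholomorphism between an open neighborhood $N_\alpha \subset \mathbb{C}^2$ of $W_\alpha$ and an open subset of $\mathbb{C}^2$ intersecting $\mathbb{B}$. On overlaps $N_\alpha\cap N_\beta$, the transition maps are CR-equivalences between pieces of $S^3$, hence extend to global automorphisms of $\mathbb{B}$; this produces a \v{C}ech $1$-cocycle with values in $\operatorname{Aut}(\mathbb{B})$. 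The simple connectedness of $\Omega$ (combined with connectedness of $b\Omega$) trivializes this cocycle, and the locally adjusted biholomorphisms glue into a global biholomorphism $\Omega \to \mathbb{B}$. The principal obstacle remains the local-to-global passage in the first step: extremality is by definition a global infimum, and its reduction to a local condition relies on the boundary-regularity results for extremal discs mentioned above.
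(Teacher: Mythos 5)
Your proposal is correct and follows essentially the same route as the paper: the authors likewise reduce to \autoref{thm:mainlocal} (with the global-versus-local extremality issue absorbed by Huang's localization results, since the technical theorem only needs the discs to be stationary) and then invoke Theorem~C of Chern--Ji \cite{MR1418903}, which is precisely the covering/monodromy argument you sketch in your final paragraph. The only difference is that the paper cites that theorem rather than re-deriving it.
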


We point out that the strict pseudoconvexity of the domain  is crucial as can be seen by considering 
the domain $\Omega=\{|z|^2+|w|^4<1\} \subset \mathbb{C}^2$, where it can been shown using \cite{MR1267598} that any  Segre variety $S_q \cap \Omega$ for $q \in \overline{\Omega}^c$ near $b\Omega$ is an extremal disc. 
Finally, we note that the local version \autoref{thm:mainlocal} is 
really stronger than
Corollary~\ref{thm:mainglob}, which 
 follows from \autoref{thm:mainlocal} after applying \cite[Theorem C]{MR1418903}. Indeed, one does not expect 
 a locally spherical hypersurface to be globally CR equivalent to the sphere, by examples 
 due to Burns and Shnider \cite{MR419857}; for these, by the localization theorem of 
 Huang \cite{MR1260844}, (small) stationary discs are exactly the intersections of (small) Segre varieties
 with the hypersurface.

{\bf Acknowledgment.} The authors would like to thank Ilya Kossovskiy for 
his interest in this work and the inspiring discussions arising from it. We also thank an anonymous referee and the editor for remarks and comments which helped
us to improve the paper. 
\newpage
\section{Preliminaries}
In this section, we recall some preliminaries which 
are needed later in the proof.

\subsection{The Chern-Moser normal form} 
\label{sec:the_chern_moser_normal_form}
Here we introduce the basics of 
the Chern-Moser normal form for real-analytic (or formal)
hypersurfaces in $\C^2$ which we need 
for our main argument; the normal form 
was introduced and its relation with the equivalence 
problem studied in \cite{MR0425155}. Recall that a 
germ of a real-analytic hypersurface $(M,p) \subset (\C^2, p)$ 
is defined by the vanishing locus of a germ of a 
real-valued real analytic function $\varrho (\tilde z, \bar{\tilde z}, 
\tilde w , \bar{\tilde w}) \in \C \left\{ {\tilde z, \bar{\tilde z}, 
\tilde w , \bar{\tilde w}} \right\}$. 
Strict pseudoconvexity of $M$ at $p$ means that 
the bordered complex Hessian of $\varrho$ satisfies
\[ \begin{vmatrix}
	0  & \varrho_{\tilde{z}} (p) & \varrho_{\tilde{w}} (p) \\
	\varrho_{\bar {\tilde{z}}} (p) & \varrho_{{\tilde{z}} \bar {\tilde{z}}} (p) & \varrho_{{\tilde{w}} \bar {\tilde{z}}} (p) \\
	\varrho_{\bar {\tilde{w}}} (p) & \varrho_{{\tilde{z}} \bar {\tilde{w}}} (p) & \varrho_{{\tilde{w}} \bar {\tilde{w}}} (p)
\end{vmatrix} \neq 0. \]

The {\em model hypersurface} for 
strictly pseudoconvex hypersurfaces in $\C^2$ 
is the Heisenberg hypersurface 
$\mathbb{H}  \colon \real w = |z|^2$. 
The automorphisms of the Heisenberg hypersurface
are given by linear fractional maps 
of the form
\[ (z,w) \mapsto \left( \lambda \frac{z + a w}{1 + 2  \bar a z + (|a|^2 + it) w}, |\lambda|^2 \frac{ w}{1 + 2  \bar a z + (|a|^2 + it) w} \right),
 \]
 for $(\lambda,a,t)\in \C^* \times \C \times \R$.

If $H(z,w)= (f(z,w), g(z,w))$ 
is a linear fractional map 
of this form, $H$ can be determined from 
$f_z (0)$, $f_w(0)$, and $\imag g_{ww} (0)$. Thus, 
the jet map $j_0^2 \colon {\rm Aut} (\mathbb{H},0) \to 
G_0^2 (\C^2)$,  $H\mapsto j_0^2 H$, is an 
injective homeomorphism onto its image 
$\Gamma$. The group ${\rm Aut} (\mathbb{H},0)$ is 
of maximal dimension amongst all
automorphism groups of strictly pseudoconvex
hypersurfaces in $\C^2$ and therefore gives  
the natural space for
parameters of a normal form for the family 
of strictly pseudoconvex hypersurfaces under the action 
of the group of local 
biholomorphisms.

The  
{\em Chern-Moser normal form} gives, for any choice 
of a parameter $\gamma\in \Gamma$,  
a change of coordinates $(\tilde z, \tilde w )= H_M^p (z,w,\gamma) 
= (f(z,w,\gamma), g(z,w,\gamma)) $, which is 
uniquely determined under the following conditions: 
\begin{compactitem}
\item in the
new coordinates $(z,w)$, $p$ is the origin (i.e. $H_M^p (0, \gamma) = p$);  
\item  the defining equation 
of $M$ in the new coordinates has the form
\[ \real w = \varphi(z ,\bar z, \imag w) = \sum_{j,k} \varphi_{j,k} (\imag w) z^j \bar z^k,  \]
where $\varphi$ satisfies the {\em normalization 
conditions} 
\[ \begin{aligned}
\varphi_{j,0} (t) &= 0,  & j \geq 0, \\
\varphi_{1,1} (t) &= 1,  \\ 
\varphi_{j,1} (t) &= 0, & j\geq 2, \\ 
\varphi_{2,2} (t) = \varphi_{2,3} (t) = \varphi_{3,3} (t) &= 0.
\end{aligned}   \]
\end{compactitem}
One can in addition require that 
$j_0^2 H_M^p (\cdot, \gamma) = \gamma \cdot j_0^2 H_M^p(\cdot, {\rm id})$.
The {\em chains} through $p$ are the (parametrized) curves given by 
$t\mapsto H_M^p (0,t,\gamma)$. 
 
The term $\varphi_{j,k} (t) z^j \bar z^k$  is said to be of {\em type $(j,k)$}.
The lowest order (nontrivial) invariant terms in 
the normal form are therefore the terms of 
type $(4,2)$ (and $(2,4)$), $\varphi_{4,2}$ and $\varphi_{2,4}$ respectively; 
they correspond to {\em Cartan's cubic tensor} from \cites{MR1556687,MR1553196}.  
The condition that  $\varphi_{2,4} (0) = \varphi_{4,2} (0) = 0$
is invariant under different choices of $\gamma$, and if 
it is  satisfied for one (and hence, all) $\gamma \in \Gamma$ 
we say that the point $p$ is {\em umbilic}. 

Umbilicity is quite different in higher dimensions, which is why we 
concentrate on the two-dimensional case here. Umbilicity at a 
point means that 
the order of approximation of a given strictly pseudoconvex  with 
the model hypersurface is higher than generically expected, and the model
hypersurface is the only one which is everywhere umbilical: If $M$ is 
a strictly pseudoconvex real-analytic
 hypersurface, and if $p\in M$ has the property 
that it possesses a neighbourhood consisting of umbilical points, then 
there exists a neighbourhood $U\subset M$ of $p$ which is biholomorphically equivalent to 
a piece of the model hypersurface. The same holds for smooth $M$ if one replaces
``biholomorphically equivalent'' by ``$\mathcal{C}^\infty$-CR equivalent'', and 
the fact can simply be stated by saying that every umbilical hypersurface 
is locally spherical. 

\subsection{Segre varieties} 
\label{sec:segre_varieties}
Let $M\subset \CN$ be a real-analytic hypersurface, defined locally
at $p\in M$ by a real-analytic equation $\varrho(Z,\bar Z) = 0$. To be more
precise, here we assume that 
$\varrho (Z, W) $ is holomorphic on $U \times  U^* \subset \C^{2N}$, where
$U^* = \left\{ Z \colon \bar Z \in U \right\}$, and $\varrho_W (Z,W) \neq 0$ for 
$(Z,W) \in U\times U^*$. then 
one can define the Segre variety associated to the point $q$ (in 
a suitable neighbourhood $V$ of $p$) 
by 
\[ S_q = \left\{ z \in U \colon \varrho(z, \bar q) =0 \right\}, \quad q\in V. \]
For good choices of $U$ and $V$, for every $q\in V$, the variety 
$S_q \subset U$ is a connected, smooth complex hypersurface in $U$. One can 
check that for $p\in M$, we have that $T_p S_p = T_p^c M$. Actually, 
a bit more is true: given $p\in M$, and a real-analytic 
curve $\Gamma \subset M$ through $p$ transverse to $T_p^c M$, 
one can choose coordinates 
$Z = (z_1, \dots ,z_{N-1}, w)$ near $p$ such that in these coordinates,
$p = 0$, and  
$\Gamma = \left\{ (0,\dots, 0, it) \right\}$ and for small $s$, we have that
$S_{(0,s)} = \left\{ w = -s \right\}$ (see e.g. \cite[Lemma 4.1]{Lamel:2007th}).

The importance of Segre varieties is that they transform very 
nicely with respect to holomorphic maps: If $H$ is a germ 
of a holomorphic map taking a real-analytic submanifold $M\subset \CN$
into a real-analytic submanifold $M' \subset \CNp$, then $H(S_p) \subset S'_{H(p)}$, 
where $S'_{q'}$ denotes the Segre variety of $M'$ (associated to $q'$). 

\subsection{Stationary discs} 
\label{sub:extremal_and_stationary_discs}

Let $M=\{\varrho=0\}$ be a smooth  hypersurface in  $\C^N$.     
A disc $f: \overline{\Delta} \to \C^N$ continuous up to $b\Delta$ and holomorphic in $\Delta$  is  {\it attached} to $M$  if $f(b\Delta)\subset M$. 
Following Lempert \cite{MR660145}, such a map is called {\it stationary} if there exists a continuous function 
$c:b\Delta\to\R^*$ such that the map $\zeta c(\zeta)\partial \varrho(f(\zeta))$, defined on $b \Delta$, 
extends holomorphically into $\Delta$. Here $\partial\varrho=\partial_Z\varrho$ denotes the complex gradient of $\varrho$.  Equivalently, one 
can require that $f$ allows for a meromorphic lift with a pole
of order at most $1$ to the conormal bundle $\mathcal{N}^* M$. For details
on that, we refer the reader e.g. to \cite{MR3158006}. We will only deal with small discs $f$.


\subsection{Spaces of functions with parameters} 
Here, we  recall  spaces of functions with parameters defined in \cite{MR0149505}, 
suitable for the study of 
deformations of Riemann maps  \cite[Corollary 9.4]{MR3118395} we are going to use. 
Let $I=[0,1] \subset \R$, let $\Omega$ be a bounded open set in a Euclidean space and let  $k,j\geq0$ be integers and let $0\leq\alpha<1$,
We denote by $\mathcal{C}^{k+\alpha}(\overline \Omega)$ the standard H\"older space with its usual norm 
$|\cdot|_{k+\alpha}$.
Define $\hC^{k+\alpha,j}(\overline \Omega,I)$
to be the set of functions $f$ defined on $\overline \Omega\times I$  such that
 for all integers $0\leq l\leq j$,
the map $t\mapsto \partial_t^l f(.,t)$ is continuous from $I$ into
$\mathcal{C}^{k}(\overline \Omega)$ and such that
$$
\|f\|_{k+\alpha,j}:=\max_{0\leq l\leq j} \sup_{t\in I} |\partial_t^lf(\cdot,t)|_{k+\alpha}<\infty.
$$
We now define
$$\mathcal{C}^{k+\alpha, j}(\overline \Omega, I):=\bigcap_{0\leq l\leq j}\hC^{k-l+\alpha,l}(\overline \Omega, I),$$
and 
$$|f|_{k+\alpha,j }:=\max_{0\leq l\leq j} \|f \|_{k-l+\alpha,l}.$$
As pointed out in \cite{MR3118395},  we have the following inclusion:
 \begin{equation}\label{eqincl}
 \mathcal{C}^{k+\alpha}(\overline \Omega\times  I) \subset \mathcal{C}^{k+\alpha,k}(\overline \Omega,I).
 \end{equation}
In the present paper we will also need:
\begin{lemma}\label{lemincl}
The following inclusion holds
$$  \mathcal{C}^{k+1+\alpha,k+1}(\overline \Omega,I)  \subset \mathcal{C}^{k}(\overline \Omega\times  I).$$
\end{lemma}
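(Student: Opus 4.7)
My plan is to prove the inclusion by a direct unwinding of the definitions, combined with the standard fact that a function which is uniformly Lipschitz (or merely $\mathcal{C}^1$ with a uniform bound) in one group of variables and strongly continuous in the other is jointly continuous. Fix a multi-index $\beta$ and an integer $l \ge 0$ with $|\beta| + l \le k$, and consider the iterated derivative
\[
g(x, t) := \partial_x^\beta \partial_t^l f(x, t),
\]
obtained by differentiating first in $t$ and then in $x$. Since $f \in \mathcal{C}^{k+1+\alpha, k+1}(\overline\Omega, I) \subseteq \hC^{k+1-l+\alpha, l}(\overline\Omega, I)$, the map $t \mapsto \partial_t^l f(\cdot, t)$ is continuous from $I$ into $\mathcal{C}^{k+1-l}(\overline\Omega)$ and uniformly bounded in $\mathcal{C}^{k+1-l+\alpha}(\overline\Omega)$.

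Because $|\beta| \le k - l$, the operator $\partial_x^\beta$ is bounded both from $\mathcal{C}^{k+1-l}(\overline\Omega)$ into $\mathcal{C}^{1}(\overline\Omega)$ and from $\mathcal{C}^{k+1-l+\alpha}(\overline\Omega)$ into $\mathcal{C}^{1+\alpha}(\overline\Omega)$. Applying it therefore yields that $t \mapsto g(\cdot, t)$ is continuous from $I$ into $\mathcal{C}^{1}(\overline\Omega)$ (in particular into $\mathcal{C}^0$), and that there exists $M > 0$ with $|g(\cdot, t)|_{1+\alpha} \le M$ for every $t \in I$. The latter bound gives a Lipschitz estimate $|g(x, t) - g(y, t)| \le M |x - y|$ uniform in $t$, which combined with the former produces joint continuity of $g$ through the familiar decomposition
\[
|g(x, t) - g(x_0, t_0)| \le M |x - x_0| + \|g(\cdot, t) - g(\cdot, t_0)\|_\infty,
\]
whose right-hand side tends to zero as $(x, t) \to (x_0, t_0)$.

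It then remains to identify the iterated derivative $g$ with the classical partial derivative $D^{(\beta, l)} f$ of $f$ on the product $\overline\Omega \times I$; this is routine by induction on $|\beta| + l$ using Schwarz's theorem on mixed partials, since at each stage the intermediate derivatives involved have already been shown to exist and to be continuous. Since the above applies to every $(\beta, l)$ with $|\beta| + l \le k$, one concludes $f \in \mathcal{C}^k(\overline\Omega \times I)$, as desired.

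I do not foresee any genuine obstacle. The only subtlety worth keeping in mind is the convention embedded in the definition of $\hC^{r, j}(\overline\Omega, I)$, where strong continuity in $t$ is required only into $\mathcal{C}^{\lfloor r \rfloor}(\overline\Omega)$ rather than into the full Hölder space $\mathcal{C}^{r}(\overline\Omega)$; this is precisely why the statement demands one extra derivative in $x$ (the ``$+1$'' in $k+1+\alpha$), which is exactly what is needed to ensure a uniform Lipschitz modulus in $x$ for $g$ after applying $\partial_x^\beta$, and hence to run the joint continuity argument.
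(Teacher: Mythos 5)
Your proof is correct and follows essentially the same route as the paper's: both control the increment of a top-order mixed partial $\partial_x^\beta\partial_t^l f$ by splitting it into an $x$-step, bounded by the uniform Lipschitz estimate supplied by the extra spatial derivative, and a $t$-step. The only (harmless) differences are that you handle the $t$-step via the strong continuity of $t\mapsto \partial_t^l f(\cdot,t)$ built into the definition, where the paper instead applies the mean value theorem in $t$ using the $(l+1)$-st $t$-derivative, and that you are somewhat more explicit about identifying the iterated partials with the joint derivatives on $\overline\Omega\times I$.
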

\begin{proof}
Let $f \in \mathcal{C}^{k+1+\alpha,k+1}(\overline \Omega,I)$. We first note that $f$ is $k$-times differentiable. Now let $p,q\geq 0$ 
be two integers with $p+q=k$ and let  
 $(x,t), (x',t') \in \overline \Omega \times I$ sufficiently
  close to each other. We have 
\[\begin{aligned}
|\partial_x^p\partial_t^qf(x,t)&-\partial_x^p\partial_t^qf(x',t')| \\ &\leq |\partial_x^p\partial_t^qf(x,t)-\partial_x^p\partial_t^qf(x,t')|+|\partial_x^p\partial_t^qf(x,t')-\partial_x^p\partial_t^qf(x',t')| \\
& \leq   \sup_{s\in I}|\partial_x^p\partial_t^{q+1}f(x,s)| |t-t'|+ \sup_{y\in [x,x']}|\partial_x^{p+1}\partial_t^{q}f(y,t')| \|x-x'\|\\
& \leq   \sup_{s\in I}|\partial_t^{q+1}f(\cdot,s)|_{k+1+\alpha} |t-t'|+ \sup_{s \in I}|\partial_t^{q}f(\cdot,s)|_{k+1+\alpha} \|x-x'\|\\
& \leq   |f|_{k+1+\alpha,k+1} |t-t'|+ |f|_{k+1+\alpha,k+1} \|x-x'\|,
\end{aligned}    \]
which proves the lemma.
\end{proof}







\section{A further result and proof of the main theorem} 
\label{sec:proof_of_the_main_result}

In order to prove 
\autoref{thm:mainlocal}, we shall make use of the following result 
which is valid for finitely smooth real hypersurfaces 
in $\C^2$.   We use the following  convention: We write $O(|z|^k)$ (resp. $O(t^k)$) to denote a function of class at least  $\mathcal{C}^{k}$ which is bounded by $|z|^k$ (resp. $t^k$) up to a multiplicative constant. 
\begin{theorem} \label{thm:maintechnical} Let $S\subset \C^2$ be a $\mathcal{C}^{8+\alpha}$-smooth
real hypersurface through the origin, with $\alpha>0$, whose defining equation (near the origin) can
be written in the form 
\[\varrho(z,w,\overline z,\overline w)= \real w - |z|^2 + A z^2\overline z^4 + \overline A z^4\overline z^2 +  \imag w\,h(z,\overline z, \imag w) + g(z,\overline z), \]
with $g(z, \overline{z}) = O(|z|^7)$. If the 
discs $ \Omega_t =\{w=t^2\}\cap \{\varrho>0\}  $, for small 
$t \in \R$, are stationary, then $A = 0$.
\end{theorem}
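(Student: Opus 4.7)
The plan is to expand the stationarity conditions in powers of $t$ around the model disc $f^{(0)}(\zeta) = (t\zeta, t^2)$, parametrized by the Riemann map of $\Omega_t$ viewed in the $z$-plane, and to extract at order $t^5$ a single Fourier mode that isolates $A$.

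First, I would parametrize the disc. On $\{w = t^2\}$ we have $\imag w = 0$, so $\Omega_t$ is cut out in $\C_z$ by $|z|^2 = t^2 + 2\real(A z^2 \bar z^4) + g(z, \bar z)$, a $\mathcal{C}^{8+\alpha}$ perturbation of the circle $|z| = t$. Using the parameter-dependent Riemann mapping results invoked in Section~2.4 (see \autoref{lemincl} and \cite[Corollary 9.4]{MR3118395}), there is a smoothly $t$-dependent Riemann map $\psi_t \colon \Delta \to \Omega_t$. Solving the boundary equation order by order yields
\[ \psi_t(\zeta) = t\zeta + t^5 \bar A \zeta^3 + O(t^6), \]
and on $b\Delta$, $\overline{\psi_t(\zeta)} = t/\zeta + t^5 A/\zeta^3 + O(t^6)$.

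Second, I would translate stationarity into Fourier conditions. Along $f(\zeta) = (\psi_t(\zeta), t^2)$, using $\imag w = 0$,
\[ \varrho_z|_f = -\bar z + 2Az\bar z^4 + 4\bar A z^3 \bar z^2 + g_z, \qquad \varrho_w|_f = \tfrac{1}{2} - \tfrac{i}{2} H(z, \bar z), \]
with $H(z, \bar z) := h(z, \bar z, 0)$. Stationarity is equivalent to the two functions $\hat f_j(\zeta) := \zeta c(\zeta) \varrho_{z_j}|_{f(\zeta)}$ ($j = 1, 2$) extending holomorphically to $\Delta$. Writing $c(\zeta) = \sum_{k \geq 0} t^k c_k(\zeta)$ with each $c_k$ real on $b\Delta$, and using that the form of $\varrho$ encodes the Chern-Moser normalization, so that $H$ has no pure terms of type $(j,0), (0,k), (j,1)_{j \geq 2}, (1,k)_{k \geq 2}, (1,1), (2,2), (2,3), (3,2), (3,3)$ and hence $H(z, \bar z) = O(|z|^6)$, one finds $\varrho_w|_f = \tfrac{1}{2} + O(t^6)$. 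Consequently, for $k \leq 5$ the holomorphic extension of $\hat f_2|_{t^k}$ forces $\zeta c_k(\zeta)$ to have no negative Fourier modes; reality of $c_k$ then restricts its Fourier support to $\{-1, 0, 1\}$, and in particular the $\zeta^{-2}$ coefficient $c_{4,-2}$ of $c_4$ vanishes.

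Third, I would extract the obstruction from $\hat f_1$ at order $t^5$. The only nontrivial contributions are $(\zeta c_0)(\varrho_z|_{t^5})$ and $(\zeta c_4)(\varrho_z|_{t^1})$, giving
\[ \hat f_1|_{t^5} = \frac{c_0 A}{\zeta^2} + 4 c_0 \bar A \zeta^2 - c_4(\zeta). \]
Vanishing of the $\zeta^{-2}$ Fourier coefficient forces $c_0 A = c_{4,-2} = 0$, and since $c_0 > 0$ we conclude $A = 0$. The main obstacle will be ensuring the perturbative expansions are valid in the regularity class $\mathcal{C}^{8+\alpha}$, so that both the Riemann map $\psi_t$ and the function $c(\cdot, t)$ depend at least $\mathcal{C}^5$-smoothly on $t$ and the Fourier analysis at order $t^5$ is justified; this is precisely what the spaces $\mathcal{C}^{k+\alpha, j}(\overline\Omega, I)$ and \autoref{lemincl} from Section~2.4 provide, and the regularity exponent $8 + \alpha$ is essentially tight because the lowest $A$-term in $\varrho$ is of degree $6$.
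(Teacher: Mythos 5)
Your proposal is correct in substance and arrives at the same obstruction as the paper, but it routes the computation differently. The paper never expands the Riemann map in $t$: it pulls the stationarity conditions back to the curve $S_t$, tests holomorphic extendability via the moment conditions $\int_{S_t} z^m f\, dz = 0$, and parametrizes the rescaled curve in polar coordinates as $r(\theta,t)e^{i\theta}$ with $r = 1 + \real (Ae^{-2i\theta})\,t^4 + O(t^5)$; the Riemann map enters only through its regularity (Kellogg plus the parameter-dependence results of Section~2.4), which is then used to prove that the multiplier $c(\theta,t)$ is $\mathcal{C}^4$ in $t$ --- a step that occupies an entire lemma and relies on Pang's explicit formula $1/\widehat{a_t}(\zeta)=\zeta\,\partial\varrho(f_t(\zeta))\cdot f_t'(\zeta)$ to see that $t^2\widehat{a_t}$ stays bounded away from $0$ and smooth up to $t=0$. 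You instead transport everything to $b\Delta$ and therefore need the explicit fourth-order variation $\psi_t(\zeta)=t\zeta+t^5\bar A\zeta^3+O(t^6)$ of the Riemann map; your formal computation of that coefficient is right (it is the Hadamard-type variation solving $\real q(e^{i\theta})=\real(\bar Ae^{2i\theta})$ with $q(0)\in\R$), and your final identity $\hat f_1|_{t^5}=-c_4+c_0(A\zeta^{-2}+4\bar A\zeta^2)$ combined with $c_{4,-2}=0$ from the $w$-component is exactly the paper's relation $\tfrac{d^4\bar\gamma_2}{dt^4}(t)-4!\,(A\gamma_0(t)+\bar A\bar\gamma_4(t))=O(t)$ read at $t=0$. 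What your route buys is cleaner Fourier bookkeeping (no Jacobian factor $(\partial_\theta r+ir)e^{i\theta}$ and no $dk/d\theta$ terms); what it costs is that the two analytic facts you defer to the end are precisely where the real work lies: (i) with $\mathcal{C}^{8+\alpha}$ data the composed Riemann map is only $\mathcal{C}^5$ jointly in $(\zeta,t)$, so identifying its exact $t^5$ Taylor coefficient requires running the variational argument at that limited regularity --- a burden the paper deliberately avoids by staying on $S_t$; and (ii) the expansion $c=c_0+\cdots+t^4c_4+o(t^4)$ with $c_0$ of positive mean is not free, being the content of the Pang-formula lemma, and your claim for $k\le 5$ in the $w$-component overshoots what the available $\mathcal{C}^4$-dependence of $c$ on $t$ justifies (harmlessly, since only $k\le 4$ is used). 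Note finally that, exactly as in the paper's own proof, you are implicitly invoking $h(z,\bar z,0)=O(|z|^6)$, which comes from the Chern--Moser normalization rather than from the literal hypotheses of the theorem.
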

\autoref{thm:mainlocal} is 
 a straightforward consequence of 
\autoref{thm:maintechnical}: the result of 
Huang \cite{MR1260844} already mentioned in the introduction shows 
that the extremal discs we consider are actually stationary, 
 and the results of 
Chern and Moser summarized in \autoref{sec:the_chern_moser_normal_form} show 
that there exists a spherical neighbourhood of $0$ in $S$. The rest of this 
section is devoted to the proof of \autoref{thm:maintechnical}, which is going to be developed 
in a series of lemmas. 


First note that if 
for any $t\in \mathbb R$ we write $S_t=\{w=t^2\}\cap S$, then for 
 $|t|>0$ small enough, $S_t$ is a closed curve (of 
class $\mathcal{C}^{8+\alpha}$) contained in $S$, bounding $\Omega_t$. For $t$ small enough, 
we will have that $(0,t^2) \in \Omega_t$.
We denote by 
$\pi^1$ the projection onto 
the first coordinate and for all $|t|>0$ small enough,
we consider 
$R_t:\Delta\to \pi^1(\Omega_t)$ the (uniquely determined) Riemann map such that $R_t(0)=0$, $R'_t(0)>0$. Define $f_t:\Delta\to\mathbb C^2$ as $f_t(\zeta)=(R_t(\zeta),t^2)$. By construction, each $f_t$ is an analytic disc attached to $S$. In the following, 
for the sake of notational simplicity, we will 
identify $\Omega_t$ with $\pi^1 (\Omega_t)$ (as 
well as $S_t$ with $\pi^1 (S_t)$).

By definition, $f_t$ is stationary if and only if there exists a continuous function $a_t:b\Delta\to \mathbb R^+$ and holomorphic functions $\widetilde z_t,\widetilde w_t\in \mathcal O(\Delta)\cap C(\overline \Delta)$ satisfying
\begin{equation} \label{eq:stat}
\begin{aligned} 
\widetilde z_t(\zeta) &= \zeta a_t(\zeta) \frac{\partial \varrho}{\partial z}(R_t(\zeta), t^2, \overline{R_t(\zeta)}, t^2 ) \\ 
\widetilde w_t(\zeta)&= \zeta a_t(\zeta) \frac{\partial \varrho}{\partial w}(R_t(\zeta), t^2, \overline{R_t(\zeta)}, t^2 ) 
\end{aligned} 
\end{equation}
for all $\zeta\in b\Delta$.

Let now $R_t^{-1}:\Omega_t\to \Delta$ be the inverse of the Riemann map. Note that $R_t^{-1}$ is smooth of class $\mathcal{C}^{8+\alpha}$  
up to the boundary  $S_t $ by Kellogg's theorem \cite{MR1500802} (see e.g. the book of Pommerenke \cite{MR1217706}). We 
also note that we can write $R_t^{-1}(z)=ze^{\varphi_t(z)}$ for a suitable holomorphic function $\varphi_t:\Omega_t\to \Delta$, where $\varphi_t$ is
again smooth of class $\mathcal{C}^{8+\alpha}$ up to $S_t$. 
Applying (\ref{eq:stat}) for $\zeta=R_t^{-1}(z)$ we obtain
\[
\begin{aligned} 
\widetilde z_t(R_t^{-1}(z)) &= z e^{\varphi_t(z)} a_t(R_t^{-1}(z)) \frac{\partial \varrho}{\partial z}(z, t^2, \overline{z}, t^2 ) \\ 
\widetilde w_t(R_t^{-1}(z)) &= z e^{\varphi_t(z)} a_t(R_t^{-1}(z)) \frac{\partial \varrho}{\partial w}(z, t^2, \overline{z}, t^2 ) 
\end{aligned} 
\]
for all $z\in b\Omega_t=S_t$. Putting $b_t(z)=a_t(R_t^{-1}(z))$, $Z_t(z)=e^{-\varphi_t(z)}\widetilde z_t(R_t^{-1}(z))$ and $W_t(z)=e^{-\varphi_t(z)}\widetilde w_t(R_t^{-1}(z)) $ we can rewrite the system as
\begin{equation} \label{eq:stat2}
\begin{aligned} 
Z_t(z) &= z  b_t(z) \frac{\partial \varrho}{\partial z}(z, t^2, \overline{z}, t^2 ) \\ 
W_t(z) &= z  b_t(z) \frac{\partial \varrho}{\partial w}(z, t^2, \overline{z}, t^2 ) 
\end{aligned} 
\end{equation}
for $z\in S_t$. Here $b_t$ is a continuous positive function on $S_t$ and the functions $Z_t, W_t$ extend holomorphically to $\Omega_t$.

We will use systematically the following fact: a continuous function $f:S_t\to\mathbb C$ extends holomorphically to $\Omega_t$ if and only if it satisfies the \emph{moment conditions}
\[\int_{S_t} z^m f(z) dz =0 \ \ \mbox{ for all } m\geq 0.\]
Though this fact is well-known, we provide a proof for the convenience of the reader. Denote by $Cf$ the Cauchy transform
\[Cf(z)=\frac{1}{2\pi i}\int_{S_t} \frac{f(\zeta)}{\zeta-z}d\zeta.\] 

By Plemelj's formula, $f$ extends holomorphically if and only if $Cf(z)=0$ for $z\not\in \overline\Omega_t$. For 
any fixed $z$ outside $\overline\Omega_t$, $1/(\zeta-z)$ can be approximated uniformly by polynomials on $S_t$ by Runge's theorem. Since the moment conditions mean that the integral of $f$ against any holomorphic polynomial vanishes, we deduce that $f$ extends holomorphically whenever it satisfies the moment conditions. The opposite implication is a consequence of Cauchy's integral formula.

Consider the scaling $\Lambda_t: \mathbb{C}\to  \mathbb{C}$ defined by $\Lambda_t(z)=z/t$. We set $\widetilde \Omega_t=\Lambda_t(\Omega_t)$, $\widetilde S_t=\Lambda_t(S_t)$, and $\widetilde \Omega_0 = \Delta$ (with 
$\widetilde S_0 = b \Delta$).  A change of variables in the above integral implies that 
$f:S_t\to\mathbb C$ extends holomorphically to $\Omega_t$ if and only if it satisfies the moment conditions
\[\int_{\widetilde S_t} z^m f(tz) dz =0 \ \ \mbox{ for all } m\geq 0.\]
In order to compute integrals of this kind, we use polar coordinates $(r,\theta)$ and parametrize the curve $\widetilde S_t$
according to the following Lemma.




\begin{lemma}\label{lem:polar} If we parametrize the curve $\widetilde S_t$ as $\theta\mapsto r(\theta,t) e^{i\theta}$, then the function 
 $r$  is of class $\mathcal{C}^{6+\alpha}$ 
 in both variables in  a full neighbourhood of $(0,0) \in \mathbb{R}^2$, and 
 can be written as  
 $$r(\theta,t)=1+k(\theta) t^4 + r_5(\theta,t)$$ where $k(\theta)=  \real\, (A e^{-2i\theta})$ and $r_5 (\theta, t) = O(|t|^5)$.
\end{lemma}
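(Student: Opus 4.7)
The plan is to obtain a polar equation for $\widetilde{S}_t$ by rescaling $S_t$, and then to apply the implicit function theorem in the Hölder class $\mathcal{C}^{6+\alpha}$. Substituting $w=t^2$ into $\varrho = 0$ and using that $\real w = t^2$ and $\imag w = 0$, we see that $S_t$ is the real curve in the $z$-plane given by
\[
|z|^2 = t^2 + A z^2 \bar z^4 + \bar A z^4 \bar z^2 + g(z, \bar z).
\]
Applying the scaling $z = tz'$ and dividing by $t^2$, the rescaled curve $\widetilde S_t$ is cut out by
\[
|z'|^2 = 1 + 2 t^4 \real\bigl(A (z')^2 (\bar z')^4\bigr) + t^{-2} g(tz', t\bar z').
\]
Writing $z' = r e^{i\theta}$, this becomes $G(r,\theta,t) = 0$ with
\[
G(r,\theta,t) := r^2 - 1 - 2 t^4 r^6 \real\bigl(A e^{-2i\theta}\bigr) - t^{-2} g(tre^{i\theta}, tre^{-i\theta}).
\]

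Next, I would establish the joint $\mathcal{C}^{6+\alpha}$ regularity of $G$ on a neighbourhood of any point $(1, \theta_0, 0)$. The first three summands are real-analytic, so the content lies in analyzing $t^{-2} g(tre^{i\theta}, tre^{-i\theta})$. Since $g \in \mathcal{C}^{8+\alpha}$ vanishes to order $7$ at the origin, its Taylor expansion takes the form $g = g_7 + g_8 + R$, with $g_7, g_8$ the homogeneous Taylor polynomials of degrees $7$ and $8$, and $R \in \mathcal{C}^{8+\alpha}$ a remainder whose partials satisfy $|\partial^\beta R(z,\bar z)| \leq C|z|^{8+\alpha - |\beta|}$ for $|\beta| \leq 8$. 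The polynomial terms contribute
\[
t^{-2}(g_7+g_8)(tre^{i\theta},tre^{-i\theta}) = t^5 P_7(r,\theta) + t^6 P_8(r,\theta),
\]
with $P_7, P_8$ trigonometric polynomials. For the remainder $t^{-2} R(tre^{i\theta},tre^{-i\theta})$, a chain-rule computation exploiting the linearity of $\phi(r,\theta,t) = (tre^{i\theta}, tre^{-i\theta})$ in $t$ yields the bound
\[
\bigl|\partial_r^a \partial_\theta^b \partial_t^c (R\circ\phi)\bigr| \leq C|t|^{8+\alpha - c}, \qquad a+b+c \leq 8,
\]
so that multiplying by $t^{-2}$ gives partial derivatives of total order $\leq 6$ bounded on a neighbourhood of $(1,\theta_0,0)$, with Hölder decay of the $6$-th derivatives. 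Therefore $G \in \mathcal{C}^{6+\alpha}$ in its three variables, and since $G(1,\theta,0) = 0$ and $\partial_r G(1,\theta,0) = 2 \neq 0$, the Hölder implicit function theorem yields a unique $r(\theta,t) \in \mathcal{C}^{6+\alpha}$ with $r(\theta,0) = 1$ solving $G(r(\theta,t),\theta,t) = 0$; this is the polar parametrization of $\widetilde S_t$.

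To obtain the asymptotic form, I would write $r = 1 + u$ in $G = 0$, yielding
\[
2u + u^2 = 2 t^4 (1+u)^6 \real(A e^{-2i\theta}) + O(|t|^5).
\]
The right-hand side is $O(|t|^4)$, which forces $u = O(|t|^4)$; substituting this bound back gives $u = t^4 \real(A e^{-2i\theta}) + O(|t|^5)$, which is exactly the claimed decomposition $r(\theta,t) = 1 + k(\theta) t^4 + r_5(\theta,t)$ with $k(\theta) = \real(A e^{-2i\theta})$ and $r_5 = O(|t|^5)$.

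The main obstacle is the joint regularity analysis: one needs to verify that the two negative powers of $t$ in $t^{-2} R \circ \phi$ are absorbed by the order-$(8+\alpha)$ vanishing of the Taylor remainder $R$, and that derivatives in $r$ and $\theta$ (which each produce an extra factor of $t$ through the chain rule on $\phi$) combine with $t$-derivatives (which do not) to give precisely the $\mathcal{C}^{6+\alpha}$ regularity, matching the $\mathcal{C}^{8+\alpha}$ hypothesis on $\varrho$ up to the loss of two derivatives caused by the $t^{-2}$ scaling.
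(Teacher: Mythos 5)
Your proof is correct and follows essentially the same route as the paper: rescale by $t$, divide the polar defining equation by $t^2$, and apply the implicit function theorem in $\mathcal{C}^{6+\alpha}$ at $r=1$, $t=0$. The only differences are cosmetic — you spell out the regularity of $t^{-2}g(tre^{i\theta},tre^{-i\theta})$ in more detail than the paper, and you extract the coefficient of $t^4$ by algebraic bootstrapping of $r=1+u$ rather than by computing $\partial_t^j r(\theta,0)$ for $j\le 4$ via implicit differentiation, which is what the paper does.
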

\begin{proof}
The function $r$ satisfies $\varrho(tr(\theta,t)e^{i\theta}, t^2, tr(\theta,t)e^{-i\theta}, t^2)\equiv 0$, i.e.\
\begin{equation}\label{eq:polar}
t^2 - t^2 r^2(\theta,t) + 2k(\theta) t^6 r^6(\theta,t)  + g(tr(\theta,t)e^{i\theta},tr(\theta,t)e^{-i\theta})= 0. 
\end{equation}
Since $g(z,\overline z)=O(|z|^7)$ we have $g(tr(\theta,t)e^{i\theta},tr(\theta,t)e^{-i\theta})=t^2 G(\theta,t)$ where  $G$ is of class $\mathcal{C}^{6+\alpha}$ and satisfies
$G(\theta,t) = O(t^5)$. This allows us to rewrite (\ref{eq:polar}) as
\begin{equation*}
1 - r^2(\theta,t) + 2k(\theta) t^4 r^6(\theta,t)  +  G(\theta,t)= 0. 
\end{equation*}
The implicit function theorem allows us to solve this 
equation with a unique $r$ of class $\mathcal{C}^{6+\alpha}$ satisfying $r(\theta,0)=1$. Taking successive derivatives it is immediate that $\frac{\partial^j r}{\partial t^j}(\theta,0)=0$ for $j=1,2,3$ and
\[-2r(\theta,t)\frac{\partial^4 r}{\partial t^4}(\theta,t) + 4!\cdot 2k(\theta)  +O(t)= 0,\]
so that $\frac{\partial^4 r}{\partial t^4}(\theta,0)=4! k(\theta)$. This concludes the proof of the lemma.
\end{proof}

The lemma above allows us to extend the boundary parametrization of $\widetilde S_t$ to the interior of the unit disc, to obtain a family of diffeomorphisms $\Gamma_t=\Gamma(\cdot,t):\overline{\Delta} \to \overline{\widetilde \Omega_t}$ which is $\mathcal{C}^{6+\alpha}$ in both variables  $z$ and $t$ and equal to the identity for $t=0$. We shall (if necessary) rescale 
with a map of the form $(z,w) \mapsto (\lambda z, \lambda^2 w)$ to have that $\Gamma \in \mathcal{C}^{6+\alpha}(\overline{\Delta} \times I)$, where $I=[-1,1]$. 
\begin{lemma}\label{lemana}
The map $R_t^{-1}\circ \Lambda_t^{-1} \circ \Gamma_t: \overline{\Delta} \to \overline{\Delta}$ is in $\mathcal{C}^{5}(\overline{\Delta} \times I)$ and is, for $t=0$, the identity.   
\end{lemma}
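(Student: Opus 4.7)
The plan is to exploit the scaling invariance of the Riemann map, which allows us to identify $R_t^{-1}\circ\Lambda_t^{-1}\circ\Gamma_t$ with the inverse of a Riemann map onto a family of domains converging smoothly to the unit disc, and then invoke the parametric Riemann mapping theorem from \cite{MR3118395}.

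First, I would observe that since the Riemann map is covariant under scaling, the composition $\widetilde R_t := \Lambda_t\circ R_t \colon \Delta \to \widetilde\Omega_t$ is the unique Riemann map of $\widetilde\Omega_t$ satisfying $\widetilde R_t(0)=0$ and $\widetilde R_t'(0)=R_t'(0)/t>0$. Consequently,
\[
R_t^{-1}\circ\Lambda_t^{-1}\circ\Gamma_t \;=\; \widetilde R_t^{\,-1}\circ\Gamma_t.
\]
At $t=0$ we have $\widetilde\Omega_0=\Delta$, and the normalization uniquely forces $\widetilde R_0=\mathrm{id}$; since by construction $\Gamma_0=\mathrm{id}$, the composition at $t=0$ is the identity, giving the second claim of the lemma.

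For the regularity, I would apply Corollary~9.4 of \cite{MR3118395} on parameter-dependent Riemann maps. By hypothesis $\Gamma\in\mathcal{C}^{6+\alpha}(\overline\Delta\times I)$, and via the inclusion (\ref{eqincl}) this gives $\Gamma\in\mathcal{C}^{6+\alpha,6}(\overline\Delta,I)$. The cited corollary then yields $\widetilde R_t^{\,-1}\circ\Gamma_t\in\mathcal{C}^{6+\alpha,6}(\overline\Delta,I)$. Invoking \autoref{lemincl} with $k=5$ produces the desired conclusion $\widetilde R_t^{\,-1}\circ\Gamma_t\in\mathcal{C}^{5}(\overline\Delta\times I)$.

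The main obstacle I expect is verifying the hypotheses of Corollary~9.4 uniformly down to $t=0$, where the original Riemann map $R_t$ degenerates (its derivative at the origin tends to $0$) because $\Omega_t$ shrinks to a point. The point of the rescaling by $\Lambda_t$ is precisely to remove this degeneration: after rescaling, \autoref{lem:polar} shows that $\widetilde\Omega_t$ depends $\mathcal{C}^{6+\alpha}$-smoothly on $t$ and equals $\Delta$ at $t=0$, so that the renormalized Riemann maps $\widetilde R_t$ vary smoothly in the parametrized H\"older sense. Once this smoothness of $\widetilde R_t$ in $t$ is in place, the rest is a routine bookkeeping of the regularity indices through the function-space inclusions.
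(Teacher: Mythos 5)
Your proposal is correct and follows essentially the same route as the paper: identify the composition with the renormalized Riemann map of $\widetilde\Omega_t$, apply \cite[Corollary 9.4]{MR3118395} after passing to $\mathcal{C}^{6+\alpha,6}(\overline\Delta,I)$ via \eqref{eqincl}, and conclude with \autoref{lemincl}, with the $t=0$ statement forced by the normalization and $\Gamma_0=\mathrm{id}$. No substantive differences from the paper's argument.
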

\begin{proof}
Since the family of diffeomorphisms 
$\Gamma_t:\overline{\Delta} \to \overline{\widetilde \Omega_t}$  is $\mathcal{C}^{6+\alpha}$ in both variables $z$ and $t$, we 
can apply \eqref{eqincl} to obtain that 
 $\Gamma \in \mathcal{C}^{6+\alpha,6} (\overline{\Delta} \times I)$.
 By
 \cite[Corollary 9.4]{MR3118395}, we have $R_t^{-1}\circ \Lambda_t^{-1} \circ \Gamma_t \in \mathcal{C}^{6+\alpha,6}(\overline{\Delta}\times I)$ and 
 by Lemma \ref{lemincl} it follows that $R_t^{-1}\circ \Lambda_t^{-1} \circ \Gamma_t \in \mathcal{C}^{5} (\overline{\Delta} \times I)$.
 
 Since $\widetilde \Omega_0=\Delta$ and since the Riemann map $R_t^{-1}\circ \Lambda_t^{-1}: \overline{\widetilde \Omega_t} \to \overline{\Delta}$ is chosen in such a way that  $R_t(0)=0$, $R'_t(0)>0$ and so $R_t^{-1}\circ \Lambda_t^{-1}(0)=0$ and $\left(R_t^{-1}\circ \Lambda_t^{-1}\right)'(0)>0$, it follows that $R_t^{-1}\circ \Lambda_t^{-1}$ is the identity for $t=0$. Finally note that by definition, $\Gamma_0$ is the identity.        
\end{proof}

We will now apply the moment conditions to the second equation in (\ref{eq:stat2}):
\[\int_{\widetilde S_t} z^m \left(z b_t(tz) \frac{\partial \varrho}{\partial w}(tz,t^2,t\overline z, t^2)\right) dz =0 \ \ \mbox{ for all } m\geq 0\]
or equivalently
\[\int_{\widetilde S_t} z^j b_t(tz) \frac{\partial \varrho}{\partial w}(tz,t^2,t\overline z, t^2) dz =0 \ \ \mbox{ for all } j\geq 1.\]
Computing $\partial \varrho/\partial w$ we get
\[\frac{\partial \varrho}{\partial w}(z,w,\overline z, \overline w) = \frac{1}{2}-\frac{i}{2}h(z,\overline z,\imag\, w) + \imag\, w \frac{\partial}{\partial w} (h(z,\overline z, \imag\, w))\]
so that
\[\frac{\partial \varrho}{\partial w}(tz,t^2,t\overline z, t^2) = \frac{1}{2}-\frac{i}{2}h(tz,t\overline z,0). \]
Hence $b_t(tz)$ must satisfy
\[\int_{\widetilde S_t} z^j b_t(tz) \left(\frac{1}{2}-\frac{i}{2}h(tz,t\overline z,0)\right) dz =0 \ \ \mbox{ for all } j\geq 1.\]
Using the parametrization $\theta\mapsto r(\theta,t) e^{i\theta}$  for $\widetilde S_t$ the integral becomes
\begin{equation}
	\label{e:beforeplugging}
	\int_0^{2\pi} r^j e^{ij\theta} b_t(tre^{i\theta}) \left(\frac{1}{2}-\frac{i}{2}h(tr e^{i\theta},tr e^{-i\theta},0)\right) \left(\frac{\partial r}{\partial \theta} + ir\right)e^{i\theta}d\theta =0 \ \ \mbox{ for all } j\geq 1,
\end{equation}
where we write $r=r(\theta,t)$ for brevity. 

For a continuous function $a_t:b\Delta\to \mathbb R^+$ satisfying (\ref{eq:stat}), we define 
\begin{equation*}
c(\theta,t)=b_t(tr(\theta,t)e^{i\theta})=b_t(\Lambda_t^{-1} \circ \Gamma_t (e^{i\theta}))=a_t(R_t^{-1}\circ \Lambda_t^{-1} \circ \Gamma_t(e^{i\theta})).
\end{equation*}
\begin{lemma}
There is a choice of $a_t$ such that the function $c(\theta,t)$ is $\mathcal{C}^{4}$ in a neighbourhood of $[0,2\pi]\times \{0\}$ and satisfies
$\int_0^{2\pi} c(\theta,t) dt =1$ for all $t\neq 0$ small enough.
\end{lemma}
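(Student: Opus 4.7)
The plan is to exploit the scaling freedom inherent in the stationary disc condition: if $(\widetilde z_t,\widetilde w_t,a_t)$ satisfies \eqref{eq:stat}, so does $(\mu(t)\widetilde z_t,\mu(t)\widetilde w_t,\mu(t) a_t)$ for any positive function $\mu(t)$, so $c(\theta,t)$ is defined only up to multiplication by a positive function of $t$ alone. The task then splits into (i) producing a choice that makes $c$ jointly $\mathcal{C}^4$ in $(\theta,t)$ near $[0,2\pi]\times\{0\}$, and (ii) rescaling further so that $\int_0^{2\pi} c(\theta,t)\,d\theta = 1$.

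For (i), I would first fix a canonical normalization of $a_t$, e.g.\ by imposing $a_t(1)=1$ on the boundary, and show that the corresponding $a_t$ depends jointly smoothly on $(\zeta,t)$ up to $b\Delta\times I$. The cleanest approach is to transport the stationary condition \eqref{eq:stat2} back to the fixed disc $\overline{\Delta}$ via the change of variables $R_t^{-1}\circ \Lambda_t^{-1}\circ \Gamma_t$ from Lemma \ref{lemana} (whose joint regularity $\mathcal{C}^5$ we already control), thereby obtaining a parameter-dependent Riemann--Hilbert problem whose data are at least of class $\mathcal{C}^5$ jointly in $(\zeta,t)$. A parametric implicit function theorem applied in the H\"older scales $\mathcal{C}^{k+\alpha,k}(\overline\Delta, I)$ (in the same spirit as \cite[Corollary 9.4]{MR3118395}), combined with Lemma \ref{lemincl}, then produces $a_t\in \mathcal{C}^4(\overline\Delta \times I)$. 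This is the step where essentially all the work lies.

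Once (i) is available, step (ii) is routine: the composition
\[
c(\theta,t) = a_t\bigl(R_t^{-1}\circ\Lambda_t^{-1}\circ\Gamma_t(e^{i\theta})\bigr)
\]
is jointly $\mathcal{C}^4$ in $(\theta,t)$ near $[0,2\pi]\times\{0\}$ by Lemma \ref{lemana} and the chain rule; it is strictly positive since $a_t>0$; hence $I(t):=\int_0^{2\pi} c(\theta,t)\,d\theta$ is a strictly positive $\mathcal{C}^4$ function of $t$ alone, and replacing the chosen triple by its rescaling by $1/I(t)$ produces the desired $c$ with $\int_0^{2\pi}c(\theta,t)\,d\theta=1$ while preserving joint $\mathcal{C}^4$-regularity. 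The one genuine obstacle is step (i): the choice of the integer $8+\alpha$ in the hypothesis on $\varrho$ is precisely the margin needed to push the stationary disc through the cascade Lemma \ref{lemana} $\to$ Lemma \ref{lemincl} $\to$ a $\mathcal{C}^4$-regular $a_t$ without running out of derivatives.
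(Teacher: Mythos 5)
Your step (ii) (positivity of $c$, integrating in $\theta$, dividing by $I(t)$) is fine and matches the paper. The problem is step (i), which you yourself identify as ``where essentially all the work lies'' but do not actually carry out. The paper does not solve a parameter-dependent Riemann--Hilbert problem by an implicit function theorem at all: it invokes Pang's result \cite{MR1250257}, which says that for a stationary disc the multiplier $a_t$ is a positive multiple of the \emph{explicitly given} function $\widehat{a_t}$ with $1/\widehat{a_t}(\zeta)=\zeta\,\partial\varrho(f_t(\zeta))\cdot f_t'(\zeta)$. This turns the regularity question into a direct computation: one composes with $R_t^{-1}\circ\Lambda_t^{-1}\circ\Gamma_t$ (Lemma \ref{lemana}), notes that $\partial\varrho(t\Gamma_t(e^{i\theta}),t^2)$ is $\mathcal{C}^{6+\alpha}$ while $f_t'\circ(R_t^{-1}\circ\Lambda_t^{-1}\circ\Gamma_t)$ costs one derivative of the $\mathcal{C}^5$ reparametrization and is therefore $\mathcal{C}^4$, and concludes. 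Your alternative route would require proving existence, uniqueness up to scaling, and $\mathcal{C}^4$-dependence on $t$ of solutions to a degenerating family of Riemann--Hilbert problems; none of that is set up, and it is precisely the content you would need to supply. Without Pang's formula (or an equivalent explicit characterization of $a_t$), the lemma is not proved.

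A second, related omission: the multiplier degenerates as $t\to 0$, since the discs shrink. The paper computes $1/\widehat{a_t}\circ(R_t^{-1}\circ\Lambda_t^{-1}\circ\Gamma_t)(e^{i\theta})=t^2+O(t^3)$ and must pass to $\tilde a_t=t^2\widehat{a_t}$ before normalizing; only after this rescaling does $c(\theta,t)$ extend as a $\mathcal{C}^4$, strictly positive function across $t=0$. Your normalizations ($a_t(1)=1$, or dividing by $I(t)$) would only repair this if you first establish the precise rate and cleanness of the degeneration, which again requires the explicit asymptotics of $\zeta\,\partial\varrho(f_t(\zeta))\cdot f_t'(\zeta)$. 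As written, your argument does not show that the normalized $c$ is $\mathcal{C}^4$ in a \emph{full} neighbourhood of $[0,2\pi]\times\{0\}$, which is the whole point of the lemma.
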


We note that the normalization condition can of course be assumed because
the sign of $c$ is fixed. The point of the Lemma is the smoothness of 
the function $a_t$.

\begin{proof}

Recall that by Lemma \ref{lemana}, $R_t^{-1}\circ \Lambda_t^{-1} \circ \Gamma_t$ is of class $\mathcal{C}^{5}$. 
From Pang \cite{MR1250257}, 
if $f_t(\zeta)=(R_t(\zeta),t^2)$  is stationary and satisfies  (\ref{eq:stat}) for a continuous function $a_t:b\Delta\to \mathbb R^+$, then  $a_t$ is a positive multiple of $\widehat{a_t}$, which is defined  for $\zeta \in b\Delta$  by 
\begin{equation*}
\frac{1}{\widehat{a_t}(\zeta)}=\zeta\partial \varrho (f_t(\zeta))\cdot f_t'(\zeta).
\end{equation*}
 First the map 
$$\partial \varrho (f_t(R_t^{-1}\circ \Lambda_t^{-1} \circ \Gamma_t(e^{i\theta})))=
\partial \varrho ( t \Gamma_t(e^{i\theta}), t^2)$$ 
is $\mathcal{C}^{6+\alpha}$.
Note that by the chain rule, we have 
\begin{eqnarray*}
\frac{d}{d\theta} \left(t\Gamma_t(e^{i\theta})\right) &=&\frac{d}{d\theta} R_t\circ \left(R_t^{-1}\circ \Lambda_t^{-1} \circ \Gamma_t(e^{i\theta})\right)\\
&=& R_t' \left(R_t^{-1}\circ \Lambda_t^{-1} \circ \Gamma_t(e^{i\theta})\right)\frac{d}{d\theta} \left(R_t^{-1}\circ \Lambda_t^{-1} \circ \Gamma_t(e^{i\theta})\right)
\end{eqnarray*}
since $R_t$ is holomorphic. It follows that
$$f_t' (R_t^{-1}\circ \Lambda_t^{-1} \circ \Gamma_t(e^{i\theta}))=\left(\frac{\frac{d}{d\theta}\left(t \Gamma_t(e^{i\theta})\right)}{\frac{d}{d\theta}\left(R_t^{-1}\circ \Lambda_t^{-1} \circ \Gamma_t(e^{i\theta}))\right)},0\right).$$
Since $\left(R_t^{-1}\circ \Lambda_t^{-1} \circ \Gamma_t(e^{i\theta})\right)$ is $\mathcal{C}^{5}$  and  equal to 
$e^{i\theta}+O(t)$ by Lemma \ref{lemana}, the function $\frac{d}{d\theta}\left(R_t^{-1}\circ \Lambda_t^{-1} \circ \Gamma_t(e^{i\theta})\right)$ is $\mathcal{C}^{4}$ and  equal to $ie^{i\theta}$ for $t=0$. This shows that the function $f_t' (R_t^{-1}\circ \Lambda_t^{-1} \circ \Gamma_t(e^{i\theta}))$ is $\mathcal{C}^{4}$ and therefore that 
$1/\widehat{a_t}(R_t^{-1}\circ \Lambda_t^{-1} \circ \Gamma_t(e^{i\theta}))$
  is $\mathcal{C}^{4}$.
Finally, note that since  $t\Gamma_t(e^{i\theta})=te^{i\theta}+O(t^2)$ and $\partial_z\varrho(z,w)=\overline{z}+O(|z|^5)$, we have 
  $$\frac{\frac{d}{d\theta}\left(t \Gamma_t(e^{i\theta})\right)}{\frac{d}{d\theta}\left(R_t^{-1}\circ \Lambda_t^{-1} \circ \Gamma_t(e^{i\theta}))\right)}=\frac{ite^{i\theta}+O(t^2)}{ie^{i\theta}+O(t)}=t+O(t^2),$$
  and 
 $$\partial_z\varrho(R_t^{-1}\circ \Lambda_t^{-1} \circ \Gamma_t(e^{i\theta}),t^2)=te^{-i\theta}+O(t^2)$$  
from which it follows directly that 
   $$\frac{1}{\widehat{a_t}(R_t^{-1}\circ \Lambda_t^{-1} \circ \Gamma_t(e^{i\theta}))}=  t^2+O(t^3).$$
 The function  $\tilde a_t=t^2\widehat{a_t}$ satisfies all of the 
 required properties and can be rescaled so that 
 $\int_0^{2\pi} c(\theta,t) dt =1$ for all $t\neq 0$ small enough
 without changing the smoothness of $c$.  
  \end{proof}

Since $h(z,\overline z,0)=O(|z|^6)$, using Lemma \ref{lem:polar} we deduce that $h(tre^{i\theta}, tre^{-i\theta},0)=O(t^6)$, and furthermore
\begin{align}\label{eq:powder}
r(\theta,t)^j  & = 1 + j k(\theta) t^{4} + O(t^{5}), \\
\frac{\partial r}{\partial \theta}(\theta,t) & = \frac{dk}{d\theta}(\theta)t^4 + O(t^5).
\end{align}

 Thus we can rewrite \eqref{e:beforeplugging} as
 \[\int_0^{2\pi} c e^{i(j+1)\theta}  (1 + j k t^{4} + O(t^{5})) \left(\frac{1}{2}+O(t^6)\right) \left(i +  \left(\frac{dk}{d\theta}+ik\right) t^4  + O(t^5)\right)d\theta =0\]
for all $j\geq 1$. Further developing the products we obtain
\begin{equation}\label{eq:devpro}
\int_0^{2\pi} e^{i(j+1)\theta} c(\theta,t) \left(i + \left(i(j+1) k(\theta) +\frac{dk}{d\theta}(\theta)\right) t^{4} + O(t^{5})\right) d\theta =0
\end{equation}
for all $j\geq 1$. For all $|t|$ small enough we expand the function $c(\cdot,t)$ in its Fourier series $c(\theta,t)=\sum_{k=-\infty}^{+\infty}\gamma_k(t)e^{ik\theta}$, where $\gamma_k$ is $\mathcal{C}^4$ for all $k\in \mathbb Z$, $\gamma_{-k}=\overline \gamma_k$ and $\gamma_0(t)\equiv 1$ due to our normalization. We insert this series in (\ref{eq:devpro}) and ignore for the moment the precise expression of the factor multiplying $t^4$:
\begin{equation*}
\int_0^{2\pi} e^{i(j+1)\theta} \sum_{k=-\infty}^{+\infty}\gamma_k(t)e^{ik\theta}  d\theta = O(t^4), 
\end{equation*}
which means that 
\begin{equation}\label{eq:order4}
\overline \gamma_{j+1}(t)=O(t^4) \ \ \mbox{ for all } j\geq 1. 
\end{equation}
Next, we write also $k(\theta)=A e^{-2i\theta}/2 + \overline A e^{2i\theta}/2$ and $\frac{dk}{d\theta}(\theta)=-iA e^{-2i\theta}+i\overline A e^{2i\theta}$ as Fourier polynomials, so that  
\begin{equation*}
i(j+1) k(\theta) +\frac{dk}{d\theta}(\theta)= i\frac{j-1}{2}A e^{-2i\theta} + i\frac{j+3}{2}\overline A e^{2i\theta}
\end{equation*}
and we take the fourth derivative of (\ref{eq:devpro}) with respect to $t$: 
\begin{equation*}\begin{aligned}
\sum_{\ell=0}^4\binom{4}{\ell}\int_0^{2\pi}& \frac{\partial^\ell c}{\partial t^\ell} e^{i(j+1)\theta} \cdot \\ & \cdot \left(\delta_{4}^\ell + \frac{4! t^{\ell}}{\ell!}\left(\frac{j-1}{2}A e^{-2i\theta} + \frac{j+3}{2}\overline A e^{2i\theta}\right) + O(t^{\ell+1})\right) d\theta =0 \end{aligned}
\end{equation*}
where $\delta_4^\ell=1$ if $\ell=4$ and $\delta_4^\ell=0$ otherwise, and we divided by the common factor $i$. 
Replacing $\frac{\partial^\ell c}{\partial t^\ell}(\theta,t)$ with its Fourier series, we see that 
\begin{equation*}\begin{aligned}
&\sum_{\ell=0}^4\binom{4}{\ell} \cdot \\ &\cdot \left(\frac{d^\ell \overline \gamma_{j+1}}{dt^\ell}(t)\delta_{4}^\ell + \frac{4!}{\ell!}\left(\frac{j-1}{2}A \frac{d^\ell \overline \gamma_{j-1}}{dt^\ell}(t) + \frac{j+3}{2}\overline A \frac{d^\ell \overline \gamma_{j+3}}{dt^\ell}(t)\right) t^{\ell} + O(t^{\ell+1})\right)  =0 \end{aligned}
\end{equation*}
for all $j\geq 1$. In particular for $j=1$
\begin{equation*}
\sum_{\ell=0}^4\binom{4}{\ell}\left(\frac{d^\ell \overline \gamma_{2}}{dt^\ell}(t)\delta_{4}^\ell + \frac{4!}{\ell!}\left( 2\overline A \frac{d^\ell \overline \gamma_{4}}{dt^\ell}(t)\right) t^{\ell} + O(t^{\ell+1})\right)  =0.
\end{equation*}
All the terms in the previous sum are $O(t)$ except for $\ell=0$ and $\ell=4$:
\begin{equation*}
\frac{d^4 \overline \gamma_{2}}{dt^4}(t) + 4!\cdot 2\overline A \overline \gamma_{4}(t) = O(t),
\end{equation*}
which by (\ref{eq:order4}) implies that 
\begin{equation}\label{eq:gamma2}
\frac{d^4 \overline \gamma_{2}}{dt^4}(t)=O(t).
\end{equation}

We now turn to the first equation in (\ref{eq:stat2}). The moment conditions read
\[\int_{\widetilde S_t} z^j b_t(tz) \frac{\partial \varrho}{\partial z}(tz,t^2,t\overline z, t^2) dz =0 \ \ \mbox{ for all } j\geq 1,\]
and since
\[\frac{\partial \varrho}{\partial z} (z,w,\overline z,\overline w)=-\overline z+2A z \overline z^4+4\overline A z^3 \overline z^2+\imag w\,\frac{\partial h}{\partial z}(z,\overline z, \imag w) + \frac{\partial g}{\partial z}(z,\overline z), \ \ \mbox{so that }\]
\[\frac{\partial \varrho}{\partial z} (tz,t^2,t\overline z,t^2)=-t\overline z+2A t^5 z \overline z^4+4\overline A t^5z^3 \overline z^2+ \frac{\partial g}{\partial z}(tz,t\overline z),\]
by using the parametrization $\theta\to re^{i\theta}$ the integral turns into
\[\int_0^{2\pi}  e^{i(j+1)\theta} b_t \left(-tr^{j+1}e^{-i\theta}+t^5r^{j+5}(2A  e^{-3i\theta}+4\overline A  e^{i\theta})+ r^j\frac{\partial g}{\partial z}\right)\left(\frac{\partial r}{\partial \theta} + ir\right)d\theta=0 \] 
for all $j\geq 1$. Since $\frac{\partial g}{\partial z}(z,\overline z)=O(|z|^6)$ we have $r^j\frac{\partial g}{\partial z}(tre^{i\theta},tr e^{-i\theta})=O(t^{6})$. We use now (\ref{eq:powder}) and recall that $b_t(tre^{i\theta})=c(\theta,t)$ to rewrite the previous equation as 
\[\int_0^{2\pi}  e^{i(j+1)\theta} c(\theta,t)\left(-(t+(j+1)k(\theta)t^5)e^{-i\theta} +t^5(2 A  e^{-3i\theta}+4\overline A  e^{i\theta}) +O(t^6)\right)\cdot\] \[\cdot \left(i+ \left(\frac{dk}{d\theta}(\theta)+ik(\theta)\right) t^4  + O(t^5)\right)d\theta =0 \ \ \mbox{ for all } j\geq 1.\]
We substitute the expression of $k(\theta)$ and divide by $-t$ to obtain
\[\int_0^{2\pi}  e^{i(j+1)\theta} c(\theta,t)\left(e^{-i\theta}+\left(\frac{j-3}{2} A e^{-3i\theta}+\frac{j-7}{2}\overline  A e^{i\theta}\right)t^{4} +O(t^{5})\right)\cdot\] \[\cdot \left(i+ \left(-\frac{i}{2} A e^{-2i\theta}+\frac{3i}{2}\overline  A e^{2i\theta}\right) t^4  + O(t^5)\right)d\theta =0 \ \ \mbox{ for all } j\geq 1,\]
and after carrying out the products and dividing by $i$,
\[\int_0^{2\pi}   c(\theta,t)\left(e^{ij\theta}+\frac{j-4}{2}\left( A e^{i(j-2)\theta}+\overline  A e^{i(j+2)\theta}\right)t^{4} +O(t^{5})\right)d\theta =0 \]
 for all  $j\geq 1$.
Once again we differentiate under the integral sign with respect to $t$ four times:
\begin{equation*}\begin{aligned}
\sum_{\ell=0}^4\binom{4}{\ell}\int_0^{2\pi} &\sum_{k=-\infty}^{+\infty}  \frac{d^\ell \gamma_k}{dt^\ell}(t)e^{ik\theta}\cdot \\ & \cdot\left(\delta_{4}^\ell e^{ij\theta} + \frac{4!}{\ell!}\frac{j-4}{2} \left( A e^{i(j-2)\theta} + \overline  A e^{i(j+2)\theta}\right) t^{\ell} + O(t^{\ell+1})\right) d\theta =0 \end{aligned}
\end{equation*}
which translates into
\begin{equation*}
\sum_{\ell=0}^4\binom{4}{\ell}\left(\frac{d^\ell \overline \gamma_j}{dt^\ell}(t)\delta_{4}^\ell + \frac{4!}{\ell!}\frac{j-4}{2}\left( A \frac{d^\ell \overline \gamma_{j-2}}{dt^\ell}(t) + \overline  A \frac{d^\ell \overline \gamma_{j+2}}{dt^\ell}(t)\right) t^{\ell} + O(t^{\ell+1})\right) =0.
\end{equation*}
Taking $j=2$ we have
\begin{equation*}
\sum_{\ell=0}^4\binom{4}{\ell}\left(\frac{d^\ell \overline \gamma_2}{dt^\ell}(t)\delta_{4}^\ell - \frac{4!}{\ell!}\left( A \frac{d^\ell \overline \gamma_{0}}{dt^\ell}(t) + \overline  A \frac{d^\ell \overline \gamma_{4}}{dt^\ell}(t)\right) t^{\ell} + O(t^{\ell+1})\right) =0;
\end{equation*}
except for $\ell=4$ and $\ell=0$ every term is $O(t)$, hence we get
\begin{equation*}
\frac{d^4 \overline \gamma_{2}}{dt^4}(t) - 4! \left( A \overline \gamma_{0}(t) + \overline  A \overline \gamma_{4}(t)\right) = O(t).
\end{equation*}
Recalling that $\gamma_0\equiv 1$ due to our normalization, and using (\ref{eq:order4}), (\ref{eq:gamma2}), we deduce that $ A=O(t)$. This is only possible if  $ A=0$.

\bibliographystyle{abbrv}
\bibliography{segre}
\end{document}